\def \Cat{\mathfrak{Cat}}
\def\im{\mathop\mathsf{Im}}
\def\cA{{\mathcal A}}
\newtheorem{De}{Definition}[section]
\newtheorem{Th}[De]{Theorem}
\newtheorem{Pro}[De]{Proposition}
\newtheorem{Le}[De]{Lemma}
\newtheorem{Co}[De]{Corollary}
\newtheorem{Rem}[De]{Remark}
\newtheorem{Ex}[De]{Example}
\def \Hom{\mathop\mathsf{Hom}\nolimits}
\def \cA{\mathcal{A}}
\def \cB{\mathcal{B}}
\def \cC{\mathcal{C}}
\def \cD{\mathcal{D}}
\def \cE{\mathcal{E}}
\def \cF{\mathcal{F}}
\def \cG{\mathcal{G}}
\def \cH{\mathcal{H}}
\def \cP{\mathcal{P}}
\def \cW{\mathcal{W}}
\def \fB{\mathfrak{B}}
\def \fF{\mathfrak{F}}
\def \F{\mathfrak{F}}
\def \fb{{\mathfrak B}}
\def \nn{{\bf n-1}}
\def \fb{{\mathfrak B}}
\def \c{{\sf c}}
\def \hom{\mathop\mathsf{Hom}\nolimits}
\def \Hom{\mathop\mathsf{Hom}\nolimits}
\def \Set{\mathop{\sf Set}\nolimits}
\def \colim{\mathop{\mathsf{colim}}}
\def \lim{\mathop{\mathsf{lim}}}
\def \tc{\mathop{2\textnormal{-}\mathsf{colim}}}
\def \tl{\mathop{2\textnormal{-}\mathsf{lim}}}
\def\Set{\mathsf{Set}}
\def \bk{{\bf k}}
\def \F{\mathfrak{F}}
\def \n{{\bf n}}
\def \nn{{\bf n-1}}
\def \tl{\mathop{2\textnormal{-}\mathsf{lim}}}
\def \Grp{\mathsf{Grpd}}
\def \cB{\mathcal{B}}
\def \cC{\mathcal{C}}
\def \cD{\mathcal{D}}
\def\xto#1{\xrightarrow[]{#1}}
\def\id{{\sf Id}}
\def\id{{\sf Id}}
\newcommand{\Z}{\mathbb{Z}}
\def\1{^{-1}}
\def \im{\mathop{\sf Im}\nolimits}
\def \ob{{\bf Ob}}
\def \mor{{\bf Mor}}
\def \hom{\mathop{\sf Hom}\nolimits}
\def\cP{\mathcal{P}}
\def\cW{\mathcal{W}}
\def\cE{\mathcal{E}}
\def \cG{\mathcal{G}}
\def \cF{\mathcal{F}}
\def \cC{\mathcal{C}}
\def \cH{\mathcal{H}}
\def\fB{\mathfrak{B}}
\def\fF{\mathfrak{F}}
\begin{document}
	
\title{Comparison of the Colimit and the 2-Colimit}

\author[I. Pirashvili]{Ilia Pirashvili}
\address{Universität Augsburg, Lehrstuhl ALgebra u. Zahlentheorie, Universitätsstraße 14, 86159 Augsburg}
\email{ilia.pirashvili@math.uni-augsburg.de}
\maketitle

\begin{abstract}
	The 2-colimit is the 2-categorical analogue of the colimit and as such, a very important construction. Calculating it is, however, more involved than calculating the colimit. The aim of this paper is to give a condition under which these two constructions coincide. Tough the setting under which our results are applicable is very specific, it is, in fact, fairly important: As shown in a previous paper, the fundamental groupoid can be calculated using the 2-colimit. The results of this paper corresponds precisely to the situation of calculating the fundamental groupoid from a finite covering.
	
	We also optimise our condition in the last section, reducing from exponential complexity to a polynomial one.
\end{abstract}

\hfill

\emph{Keywords}: 2-colimit, colimit of groupoids, weak equivalence, fundamental groupoid, Boolean lattice

\hfill

\emph{MSC classes}: 18A30, 18B40

\section{Introduction}\label{2colim}\label{chap.comp-col-2-col}

Let $X$ be a topological space and $\mathcal{U}=\{U_1,\cdots,U_n\}\in
\mathsf{Cov}(X)$ an open covering of $X$. We have shown in \cite{gmj} that under
some small assumptions on $X$, there are equivalences of categories
\[\Pi_1(X)\simeq\tc_{\fB({\bf n})}\Pi_1^*\simeq\colim_{\fB({\bf n})}\Pi_1^*.\]
Here, $\fB({\bf n})$ denotes the poset of proper subsets of $\{1,\cdots,n\}$ and
$\Pi_1^*:\fB({\bf n})\to \Grp$ is given by \[\Pi_1^*(J)=\Pi_1\left
(\bigcap_{j\not \in J} U_j\right)\] Of course, that $\Pi_1(X)$ is isomorphic to
$\colim\Pi_1^*$ is an easy consequence of the famous reformulation of the
classical Seifert-van Kampen theorem by Brown and Salleh
\cite{brown.sal},\cite{brown}.

However, showing that $\Pi_1(X)$ is equivalent to the 2-colimit, rather than the
colimit, has some advantages. For one, this allowed us to axiomatise the
fundamental groupoid in \cite{gmj} (and also the étale fundamental groupoid in
\cite{kyoto}). Additionally, it enables us to replace the participating
groupoids $\Pi_1(U_i)$ of the 2-colimit with equivalent ones, not just
isomorphic ones. This is very helpful for calculations,  as we shall show in
Example \ref{ex52504}. We can usually replace groupoids consisting of
uncountably many objects (which appears in Brown-Salleh formula) with groupoids
having only finitely many objects. The downside is that, calculating 2-colimits
is harder than calculating colimits. The aim of this work is to alleviate this.

Let $\fF:\fB({\bf n})\to\Grp$ be a strict 2-functor in the 2-category of small
groupoids. We will show in Theorem \ref{colim=2-colim} that the colimit and
2-colimit of $\fF$ are equivalent if a certain injectivity condition is
satisfied. Though this condition will seem fairly restrictive, it should be
pointed out that any 2-functor can be deformed to satisfy this condition. This
is due to the fact that injectivity will only be required on objects and it is
an obvious and well-known fact that any functor $\cF:\cG\to\cH$ between
groupoids (or categories) can easily be replaced with a functor
$\cF':\cG\to\cH'$ that is injective on objects, and for which $\cH$ and $\cH'$
are equivalent groupoids. As such, this is only a theoretical restriction that
will not be a great hindrance in practice.

We will give a few small applications of this result in calculating the
fundamental groupoid of a topological space. More applications can be found in a
joint paper with H. Brenner \cite{BP}.

Lastly, we will also show that it is possible to truncate the 2-colimit at
"depth 3". That is, we can define a new poset $\fB({\bf n,n-3})$ from $\fB({\bf
n})$ which consists of proper subsets of $\{1,\cdots, n\}$ with cardinality
$\geq n-3$. We have an equivalence of categories
\[\tc\fF\simeq\tc\fF',\]
where $\fF':\fB({\bf n,n-3})\to\Grp$ is the restriction of $\fF:\fB({\bf n})\to
\Grp$. We combine this with our previous result to simplify the condition under
which $\tc\fF\simeq\colim\fF$. In particular, the new condition is of polynomial
complexity.
 
The paper is organized as follows: In Section \ref{s.lim-colim} we recall the
construction of 2-limits and colimits for strict functors, defined over posets
with values in groupoids. We introduce the main trick in Section \ref{injob},
which enables us to replace 2-pushouts by the usual pushouts. Section
\ref{sec.bn} deals with one of the main result of this paper, Theorem
\ref{colim=2-colim}. The proof is based on the induction principle, which we
will prove in Section \ref{indprin}.

The conditions to check in the main theorem increases exponentially. Section
\ref{cod3} deals with reducing this to checking approximately $n^3$ conditions,
see Proposition \ref{62,25.04}.

\section{Construction of limits and colimits in $\Grp$}\label{s.lim-colim}

Let $I$ be a poset and $\Phi:I^{op}\to\Grp$ a strict contravariant functor to
the 2-category of groupoids. It is well-known \cite{gz}, that the category
$\Grp$ has limits and colimits, as well as 2-limits and 2-colimits. We aim to
recall these constructions in this section.

\subsection{Construction of limits in $\Grp$}\label{lim}

For $i\leq j$, let $\phi_{ij}:\Phi(j)\to\Phi(i)$ be the induced functor. Objects
of the groupoid $\lim_i\Phi$ (or simply $\lim\Phi$) are families $(x_i)$, such
that $x_i$ is an object of $\Phi(i)$ and  $\phi_{ij}(x_j)=x_i$ for any $i\leq
j$. A morphism $(x_i)\to(y_i)$ is a family $(f_i)$, such that $\phi_{ij}(f_j)=
f_i$ for any $i\leq j$. Here, $f_i:x_i\to y_i$ is a morphism of $\Phi(i)$.

\subsection{Construction of 2-limits in $\Grp$}

To construct the 2-limit $\tl\limits_i\Phi(i)$, we proceed as follows: Objects
of $\tl\limits_i\Phi(i)$ (or simply $\tl\Phi$) are collections $(x_i,\xi_{ij})$,
where $x_i$ is an object of $\Phi(i)$, while $\xi_{ij}:\phi_{ij}(x_j)\to x_i$
for $i\leq j$ is an isomorphism of the category $\Phi(j)$. It satisfies the so
called \emph{1-cocycle condition}: For any $i\leq j\leq k$, one has
\[\xi_{ik}=\xi_{ij}\circ\phi_{ij}(\xi_{jk}).\]
A morphism from $(x_i,\xi_{ij})$ to $(y_i,\eta_{ij})$ is a collection $(f_i)$,
where $f_i:x_i\rightarrow y_i$ is a morphism of $\Phi(i)$. Furthermore, the
following 
\[\xymatrix{\phi_{ij}(x_j)\ar[r]^{\xi_{ij}}\ar[d]_{\phi_{ij}(f_j)}&
	x_i\ar[d]^{f_i} \\  \phi_{ij}(y_j)\ar[r]_{\eta_{ij}}& y_i}\]
is a commutative diagram for any $i\leq j$.

Comparing the definitions of the limit and 2-limit, we see that we have the
following easy fact.
\begin{Le}\label{gamma.orsha}
    There is a full and faithful functor
    \[\gamma:\lim \Phi\rightarrow \tl \Phi,\]
    given by 
    \[\gamma(x_i)=(x_i,\id_{x_i}).\]
\end{Le}

\subsection{Construction of colimits in $\Grp$}\label{colim}

A sketch of the construction of $\colim_i\Psi(i)$ for a covariant functor
$\Psi:I\to\Grp$ is given as follows: (see \cite[pp.4-5, p.11]{gz}). The set of
objects of the groupoid $\colim_i\Psi(i)$ is the colimit of $i\mapsto
\ob(\Psi(i))$ in the category of sets, where $\ob(\Psi(i))$ is the set of
objects of $\Psi(i)$. Likewise, $\mor(\Psi(i))$ denotes the set of morphisms and
we take the colimit in the category of sets of the functor
$i\mapsto\mor(\Psi(i))$. The domain and codomain functions $\mor(\Psi(i))
\rightrightarrows\ob(\Psi(i))$ induce maps between the colimits
\[\colim_i\mor(\Psi(i))\rightrightarrows\colim_i\ob(\Psi(i)).\]
This yields a directed graph (\emph{diagram scheme} in the terminology of
\cite{gz}). Denote this directed graph by $X$ and let ${\bf Pa}(X)$ be the free
category generated by $X$. The category $\colim_i\Psi(i)$ is the maximal
quotient category for which the composite
\[\Psi(i)\to{\bf Pa}(X)\to\colim_i\Psi(i)\]
is a functor for all $i\in I$. It can be checked that the category
$\colim_i\Psi(i)$ is a groupoid, which is the colimit of $\Psi(i)$ in the
2-category $\Grp$.

\subsection{The Grothendieck construction}

Let $\Psi:I\to\Grp$ be a covariant 2-functor. We regard $\Grp$ as a natural
2-category.

Recall the Grothendieck construction $\int_I\Psi$. Objects of the category
$\int_I\Psi$ are pairs $(i,x)$, where $i$ is an object of $I$ and $x$ is an
object of $\Psi(i)$. A morphism $(i,x)\to (j,y)$ exists if $i\leq j$. It is
given by a morphism $\alpha:\psi_{i,j}(x)\to y$ of the category $\Psi(j)$. Let
$(i,x)$, $(j,y)$ and $(k,z)$ be objects of $\int_I\Psi$ with $i\leq j\leq k$.
The composition $(i,x)\to(k,z)$ of $(i,x)\to (j,y)\to (k,z)$, being given by
$\alpha:\psi_{i,j}(x)\to y$ and $\beta:\psi_{j,k}(y)\to z$ respectively, is
defined to be
\[\beta\circ\psi_{j,k}(\alpha):\psi_{i,k}(x)\to z.\]
We fix some additional notations. Let $i\in I$. Define the functor
\[L_i:\Psi(i)\to\int_I\Psi\]
by sending an object $x\in \ob(\Psi(i))$ to the pair $(i,x)$ and a morphism
$\alpha:x\to y$ to the morphism $(i,x)\to (i,y)$ determined by $\alpha$.

For $i\leq j$ there is a natural transformation
\[\lambda_{ij}:L_i \Rightarrow L_j\circ\Psi_{i,j},\]
defined by $\lambda_{ij}(x)= \id_{\psi_{ij}(x)}$. Here, $\id_{\psi_{ij}(x)}$ is
considered as a morphism $(i,x)\to(j,\psi_{i,j}(x))$. We observe that one has
\begin{equation}\label{eq1}
    \lambda_{i,k}=(\lambda_{j,k}*\psi_{jk})\circ\lambda_{i,j}
\end{equation}
for any $i\leq j\leq k$, where $*$ is the vertical composition of natural
transformations.

\subsection{The Grothendieck construction related to the 2-limit}

Equation (\ref{eq1}) allows us to define the functor
\[K:\Hom_\Cat\left(\int_I\Psi,\cG\right)\to\tl_i\Hom_{\Grp}(\Psi(i),\cG)\]
for any groupoid $\cG$, by setting
\[K(\theta)=(\theta\circ L_i,\theta\circ\lambda_{ij}).\]
Here, $\Cat$ is the 2-category of small categories, $\theta:\int_I\Psi\to\cG$
is a functor and $\Hom_\Cat$ denotes the category of functors.

The functor $K$ has an inverse
\[J:\tl_i\Hom_\Grp(\Psi(i),\cG)\to\Hom_\Cat\left(\int_I\Psi,\cG\right),\]
which is constructed as follows: Recall that an object of $\tl_i\Hom_\Grp
(\Psi(i),\cG)$ is a collection $(X_i,A_{ij})$, where $X_i:\Psi(i)\to\cG$ is a
functor and for $i\leq j$, $A_{ij}:X_i\Rightarrow X_j\circ\psi_{i,j}$ is a
natural transformation satisfying the 1-cocycle condition. Define the functor
\[J(X_i,A_{ij}):\int_I\Psi\to\cG\]
on objects by
\[(i,x)\mapsto X_i(x)\]
and on morphisms by
\[\alpha\mapsto\alpha:X_j(\alpha)\circ A_{ij}(x).\]
Here, $\alpha:(i,x)\to(j,y)$ is a morphism in $\int_I\Psi$, induced by
$\alpha:\psi_{ij}(x_i)\to x_j$.

This makes $K$ an equivalence of categories. In other words, the Grothendieck
construction can be considered a model of the 2-colimit in $\Cat$, the 2-colimit
of categories. If the Grothendieck construction would have been a groupoid, it
would have been the 2-colimit in the 2-category of groupoids. It is in general,
however, only a category. We will remedy this shortly.

\subsection{The Grothendieck construction related to the 2-colimit}

Let $\iota:\Cat\to\Grp$ be the left adjoint of the inclusion functor $\Grp
\subseteq\Cat$. Recall that $\iota(\cC)$ is obtained from $\cC$ by inverting
every morphisms in $\cC$ (see, \cite[Section 1.5.4]{gz}). In the case when $\cC
=\int_I\Psi$, it suffices to invert the morphisms $(i,x)\to(j,\psi_{ij}(x_i))$,
defined by $\id_{\psi_{ij}(x)}$. It follows that the groupoid
$\iota(\int_I\Psi)$ is the 2-colimit of $\Psi$.

\subsection{Comparison functor}\label{272504}

The 2-categorical version of the Yoneda Lemma and the functor $\gamma:\lim\Phi
\to\tl\Phi$, defined in Lemma \ref{gamma.orsha}, allow us to define a comparison
functor
\[\delta:\tc\Psi\to\colim\limits\Psi.\]
This represents the following composition of functors:
\[\Hom_\Grp(\colim\Psi,\cG)\cong\lim\Hom_\Grp(\Psi,\cG)\xto{\gamma}
\tl\Hom_\Grp(\Psi,\cG)\cong\Hom_\Grp(\tc\Psi,\cG).\]
We wish to understand the following question: Under what conditions on $\Psi$ is
$\delta$ an equivalence of groupoids?

\section{Injectivity on objects}\label{injob}

To fix terminology, let $\cF:\cG\to\cH$ be a functor between small groupoids.
This induces a map $\ob(\cF):\ob(\cG)\to\ob(\cH)$ from the set of objects of
$\cG$ to the set of objects of $\cH$. We say that the functor $\cF$ is
\emph{injective on objects} if the map $\ob(\cF)$ is injective.

\subsection{The initial step}

\begin{Le}[Deformation Lemma]\label{deformation}
    Let 
    \[\xymatrix{\cA\drtwocell\omit{\lambda}\ar[r]^{i_1}\ar[d]_{i_2 }&
    \cB\ar[d]^{j_1} &\\ \cC\ar[r]_{j_2} & \cD & }\]
    be a $2$-diagram of groupoids (meaning, $\lambda:j_1i_1\Rightarrow j_2i_2$
    is a natural isomorphism). If the functor $i_1$ is injective on objects,
    there exist a functor $j_1':\cB\to\cD$ and a natural transformation
    $\kappa:j_1\Rightarrow j_1'$, for which the diagram
    \[\xymatrix{\cA\ar[r]^{i_1}\ar[d]_{i_2} & \cB\ar[d]^{j_1'}\\
    \cC \ar[r]_{j_2} & \cD}\]
    commutes and $\lambda$ coincides with $\kappa\star i_1:j_1i_1\Rightarrow
    j_1'i_1=j_2i_2.$
\end{Le}

\begin{proof}
	Define $j_1'$ on objects by
	\[j_1'(b)=\begin{cases}j_1(b), & {\rm if} \ b\not\in \im(i_1)\\
	j_2i_2(a),& {\rm if} \ b=i_1(a).\end{cases}\]
	Let $\beta:b_1\rightarrow b_2$ be a morphism in $B$. To define
	\[j_1'(\beta):j_1'(b_1)\rightarrow j_1'(b_2),\]
	we have to consider five different cases.
	
    \noindent
    \underline{Case 1}: $b_1=i_1(a_1)$ and $b_2\not\in\im(i_1)$. Define
    $j_1'(\beta)$ to be the composite:
	\[j_1'(b_1)=j_2i_2(a_1)\xto{\lambda^{-1}(a_1)}j_1i_1(a_1)=
	j_1(b_1)\xto{j_1(\beta)} j_1(b_2)=j_1'(b_2).\]
	
    \noindent
    \underline{Case 2}: $b_1=i_1(a_1)$, $b_2=i_1(a_2)$, but $\beta\not \in
    \im(i_1)$. Define $j_1'(\beta)$ to be the composite:
	\[j_1'(b_1)=j_2i_2(a_1)\xto{\lambda^{-1}(a_1)}j_1i_1(a_1)=j_1(b_1)
	\xto{j_1(\beta)}j_1(b_2)=j_1i_1(a_2)\xto{\lambda(a_2)}j_2i_2(a_2)=j_1'(b_2).\]
	
    \noindent
	\underline{Case 3}: $\beta\in\im(j_1)$. Choose $\alpha:a_1\to a_2$
	in $A$ such that $\beta=i_1(\alpha)$ and define $j_1'(\beta)$ by
	\[j_1'(b_1)=j_2i_2(a_1)\xto{j_2i_2(\alpha)} j_2i_2(a_2)=j_1'(b_2).\]
	To check that this is independent of the choice of $\alpha$, consider
	$\alpha':a_1\to a_2$ with the property $i_1(\alpha')=\beta$. Since $\lambda$
	is a natural transformation, we have a commutative diagram
	\[\xymatrix{j_1i_1(a)\ar[rr]^{\lambda(a)}\ar@<-1ex>[d]_{_{j_1i_1(\alpha)}}
		\ar@<1ex>[d]_{\ \ =}\ar@<1ex>[d]^{_{j_1i_1(\alpha')}} && j_2i_2(a)
		\ar@<-1ex>[d]_{_{j_2i_2(\alpha)}}\ar@<1ex>[d]^{_{j_2i_2(\alpha')}} \\
		j_1i_1(a')\ar[rr]_{\lambda(a')} & & j_2i_2(a') }\]
	Since the left vertical arrows are equal and the horizontal ones are
	isomorphisms, it follows from the commutativity, that the right vertical
	arrows are also equal. Hence, $j_1'(\beta)$ is well-defined in this case.
	
    \noindent
    \underline{Case 4}: $b_1\not\in\im(i_1)$ and $b_2=i_1(a_2)$. Define
    $j_1'(\beta)$ to be the composition
	\[j_1'(b_1)=j_1(b_1)\xto{j_1(\beta)} j_1(b_2)=j_1
		i_1(a_2)\xto{\lambda(a_2)} j_2i_2(a_2)=j_2'(b_2).\]
	
    \noindent
	\underline{Case 5}: $b_1\not\in\im(i_1)$ and $b_2\not\in\im(i_1)$.
	Define $j_1'(\beta)$ as
	\[j_1'(b_1)=j_1(b_1)\xto{j_1(\beta)} j_1(b_2)=j_1'(b_2).\]
	Checking case by case shows that $j_1'$ is really a functor, with
	$j_1'i_1=j_2i_2$. Define $\kappa$ by
	\[\kappa(b)=\begin{cases} \id_{j_1(b)}, & {\rm if} \ b\not\in\im(i_1)\\
	\lambda(a),& {\rm if} \ b=i_1(a). \end{cases}\]
	One easily sees that $j_1'$ and $\kappa$ satisfy the assertions of the
	Lemma.
\end{proof}

The following result already appeared in \cite{gmj} as Proposition 4.2.

\begin{Th}\label{2}
    Let
	\[\xymatrix{ \cA\ar[r]^{i_1}\ar[d]_{i_2} & \cB \\ \cC & }\]
    be a diagram of groupoids, where $i_1$ is injective on objects. The colimit
    and the 2-colimit are equivalent.
\end{Th}

\begin{proof}
    Consider the diagram 
	\[\xymatrix{ \cA\ar[r]^{i_1}\ar[d]_{i_2} & \cB\ar[d]\ar[ddrr] & & \\ 
		\cC\ar[r]\ar[drrr] & \cP &&\\ &&& \cW\ar[ull]_{\delta}, }\]
    where $\cP$ is the pushout and $\cW$ is the 2-pushout of
    $\cC\xleftarrow{i_2}\cA\xto{i_1}\cB$. Let $\cE$ be any groupoid and
    consider:
	\[\xymatrix{ \hom(\cA,\cE) & \hom(\cB,\cE)\ar[l]^{i_1^*} &&
		\\\hom(\cC,\cE)\ar[u]_{i_2^*}&\hom(\cP,\cE)\ar[u]\ar[l]
		\ar[drr]^{\delta^*}&&\\&&& \hom(\cW,\cE).\ar[ulll]\ar[uull]}\]
	Since $\hom(-,\cE)$ maps pushouts and 2-pushouts to pullbacks and
    2-pullbacks, $\hom(\cP,\cE)$ and $\hom(\cW,\cE)$ are the pullback,
    respectively 2-pullback, of the diagram
	\[\hom(\cB,\cE)\xto{i_1^*}\hom(\cA,\cE)\xleftarrow{i_2^*}\hom(\cC,\cE).\]
    Lemma \ref{gamma.orsha} implies that $\delta^*$ is full and faithful. It
    remains for us to show that $\delta^*$ is also essentially surjective. 
	
    Since, as mentioned, $\hom(\cW,\cE)$ is the 2-pullback, taking an object
    there is the same as taking objects in $\hom(\cB,\cE)$ and $\hom(\cC,\cE)$,
    and an equivalence between these objects in $\hom(\cA,\cE)$. That is to say
    we, have a 2-commutative diagram
	\[\xymatrix{ \cA\ar[r]^{i_1}\ar[d]_{i_2} &
		\cB\ar[d] \ \\  \cC\ar[r] & \cE.\ultwocell\omit{} }\]
    From this, we immediately see that the Deformation Lemma (\ref{deformation})
    shows the essential surjectivity of $\delta^*$. This in turn, using the
    Yoneda lemma for 2-categories, shows that $\delta:\cW\to\cP$ is an
    equivalence of categories, completing the proof.
\end{proof}

\section{The Poset $\fB(\n)$}\label{sec.bn}

Denote by $\fB(V)$ the poset of all proper subset of a set $V$. By proper we
mean that the subset should not equal $V$ itself, it can, however, be empty.
More generally, for a proper subset $U\subsetneq V$, we set
\[\fB(V:U)=\{X\in \fB(V)\ |\ U\subseteq X\}.\]
Thus, $\fB(V:\emptyset)=\fB(V)$ and $\fB(V:V)=\emptyset$. If $V \subseteq W$,
then $\fB(V:U)\subseteq \fB(W:U)$. Hence, we can and we will identify the
elements of $\fB(V:U)$ with the corresponding elements of $\fB(W:U)$.

We use the notation $\n:=\{1,2,\cdots,n\}$, where $n\geq 1$. Accordingly, we
write $\fB(\n)$ instead of $\fB(\{1,\cdots,n\})$. If $k<n$, $\fB(\bk)$ can be
identified with the subposet of $\fB(\n)$, consisting of elements
$X\subsetneq\n$ such that $k+1,\cdots, n\not \in X$ and $X\not=\{1,\cdots,k\}$.
In particular, $\fB({\bf n-1})$ can be identified with the subposet of $\fB(\n)$
consisting of subsets $X\subsetneq\{1,\ldots,n-1\} \subseteq\n$, such that
$n\not\in X$ and $X\neq\{1,\cdots,n-1\}$. We can also immediately see that
\[\fB(\n:{\bf m})=\{X\,|\{1,\ldots,m\}\subseteq
X\subsetneq\{1,\ldots,n\}\}\simeq \fB({\bf n-m}).\]

Parallel to the above, we consider the subposet $\fB'({\bf n-1}): =
\fB(\n:\{n\})$. It consists of subsets $Y\subsetneq\n$, such that $n \in
Y$. This gives us a partition of posets
\[\fB(\n)=\fB({\bf n-1})\coprod \fB'({\bf n-1})\coprod\{\bf n-1\}.\]
Obviously, the map $\alpha:\fB({\bf n-1})\to\fB'({\bf n-1})$, given by
$X\mapsto X\coprod \{n\}$, defines an isomorphism of posets.
	
Let $U$ be a proper subset of $V\in\fB(\n)$. We will say that $\Psi$ satisfies
the condition $A^V_U$ if the canonical functor
\[\colim_{\fB(V:U)}\Psi\to\Psi(V)\]
is injective on objects. 

Let $n\geq 2$ be an integer. We say that $\Psi$ satisfies the condition $A({n})$
if for any proper subset $1\in V\subsetneq {\bf n}$, condition $A^V_U$ holds.
Here, $U$ is constructed from $V$ as follows: Let $k$ be the minimal integer
such that $k+1\not \in V$. Set $U=V\setminus{\bf k}$.

\begin{Th}\label{colim=2-colim}
	Let $n\geq 2$ be a natural number and $\Psi:\fB(n)\to\Grp$ a covariant
	strict $2$-functor. The comparable functor
	\begin{eqnarray}\label{eq.delta}
        \delta_{\fB(n)}:\tc_{\fB(n)}\Psi\to\colim_{\fB(n)}\Psi
	\end{eqnarray}
	is an equivalence of categories if condition $A(n)$ holds.
\end{Th}

\begin{proof}
    We proceed by induction. Assume $n=2$. Then $V=\{1\}$ and thus
    $U=\emptyset$. By our assertion, the canonical functor
	$\Psi(\emptyset)\to\Psi({\bf 1})$ is injective on objects. Hence, the case
	$n=2$ corresponds to Theorem \ref{2}. The general case follows from $n=2$
	and Corollaries \ref{1nab} and \ref{2nab}, proved below.
	
    In more detail, assume $n>2$. We have the 2-pushout diagram of groupoids
	\[\xymatrix{ \tc_{\fB(\nn)}\Psi\ar[r]\ar[d] & \tc_{\fB'(\nn)}\Psi\ar[d]\\
    \Psi({\bf n-1})\ar[r] & \tc_{\fB(\n)}\Psi}\]
    thanks to Corollary \ref{2nab}. By Corollary \ref{1nab}, we have the pushout
    diagram
	\[\xymatrix{\colim_{\fB(\nn)}\Psi\ar[r]\ar[d]&\colim_{\fB'(\nn)}\Psi\ar[d]
    \\ \Psi({\bf n-1})\ar[r] & \colim_{\fB(\n)}\Psi. }\]
    Observe that $\delta$ (see Section \ref{272504}) maps the first square to
    the second square. The left vertical map in the bottom diagram corresponds
    to our condition when $V={\bf n-1}$, hence is injective on objects. It
    follows again from Theorem \ref{2} that the second square is also a
    2-pushout diagram. To show that $\delta_{\fB(\n)}$ is an equivalence of
    groupoids, it suffices to show that $\delta_{\fB(\nn)}$ and
    $\delta_{\fB'(\nn)}$ are equivalences.
	
    One can easily see that condition $A(n)$ implies $A(n-1)$. So, our theorem
    also holds for $\fB(\nn)$. Hence, the functor $\delta_{\fB(\nn)}$ is an
    equivalences of categories by our induction assumption.
	
	The map $\alpha:\fB({\bf n-1})\to\fB'({\bf n-1})$ is an isomorphism of
    posets. We wish to apply the induction assumption to the poset $\fB'(\nn)$.
    For this, we have to transfer condition $A(n-1)$ to condition $A'(n-1)$,
    using the bijection $\alpha$.
	
    The restated condition becomes $A^{V'}_{U'}$, where $\{1,n\}\subseteq V'
    \subsetneq {\bf n}$. It is clear that our initial condition encompasses
    these cases as well. To see this, observe that such $V'$ can be written as
    $V'=V\cup \{n\}$, $1\in V$ and if $k$ is the minimal integer for which
    $k+1\not \in V$, the same will hold for $V'$. Hence, $U'=U\cup \{n\} =
    V'\setminus \bf k$. This proves that $\delta_{\fB'(\nn)}$ is also an
    equivalence of categories and we are done.
\end{proof}

While the condition in the above theorem seem fairly strict, injectivity on
objects is actually not a big hurdle. Any functor can be replaced with an
equivalent one, such that the conditions of the theorem are satisfied. We call
this \emph{stretching} and will discuss it below.

\begin{Rem}
	The conditions of Theorem \ref{colim=2-colim} can be simplified if the
	functors $\F(U_\alpha)\to\F(U_\beta),\alpha\leq\beta$ are surjective on
	objects. In such a case, we only need to ensure that the functors
	$\F(U_\alpha)\to\F(U_\beta)$ are injective on objects. This will imply
	bijectivity on objects of the functors $\F(U_\alpha)\to \F(U_\beta)$, and as
	such, $\colim_{\fb^\c(I:J)}\Psi\to\Psi(I^\c)$ will also be bijective on
	objects.
\end{Rem}

\subsection{Stretching}

Let $\cF:\cG\to\mathcal{H}$ be a functor. Assume there exist $x\neq
x'\in\cG$, such that $\cF(x)=\cF(x')=y$. Define $\mathcal{H}'$ to be the
category generated by adding a new object $y'$ to $\mathcal{H}$ and an
isomorphism $\alpha:y\rightarrow y'$. Let $z,z'$ be objects of $\mathcal{H}'$.
We have
\[\Hom_{\mathcal{H}'}(z,z')=
    \begin{cases}
        \Hom_{\mathcal{H}}(z,z'), & z\not =y'\not =z'\\
        \Hom_{\mathcal{H}}(y,z')\alpha^{-1}, & z =y'\not =z'\\
        \alpha \Hom_{\mathcal{H}}(z,y), & z \not=y' =z'\\
        \alpha \Hom_{\mathcal{H}}(y,y)\alpha^{-1}, & z =y'=z'.
    \end{cases}
\]
We also define a new functor $\cF':\cG\to\mathcal{H}'$ as follows: On
objects, we set
\[\cF'(u)=
    \begin{cases}
        \cF(u), & u\not =x'\\ y,& u=x'.
    \end{cases}
\]
Here, $u$ is an object of $\mathcal{H}$. On morphisms, $\cF'$ is defined by
\[\cF'(u\xto{\beta}v)=
    \begin{cases}
        \cF(\beta), & u\not =x'\not =v\\
        \cF(\beta)\alpha^{-1},& u=x'\not = v'\\
        \alpha\cF(\beta),& u\not =x'= v'\\
        \alpha\cF(\beta)\alpha^{-1},& u=x' = v'.
    \end{cases}
\]
The two functors $\cF$ and $\cF'$ are isomorphic via $\theta:\cF\Rightarrow
cF'$, where
\[\theta(u)=
    \begin{cases}\id_{\cF(u)}, & u\not =x'\\ \alpha,& u=x'.
    \end{cases}
\]
We call this construction \emph{stretching}\label{stretching}.

A trivial consequence of our main theorem says that the canonical map
$\delta_{\fB(n)}:\tc_{\fB(n)}\Psi\to\colim_{\fB(n)}\Psi$ is an equivalence of
categories if condition $A^V_U$ holds for all subsets $U\subsetneq
V\subsetneq\n$. As such, we do not loose much by restating the theorem in a
slightly less general, but more intuitive way:

\begin{Co}\label{colim=2-colim.Co}
	Let $n\geq 1$ be a natural number and $\Psi:\fB(\n)\to\Grp$ a covariant
    strict 2-functor. Assume that for all $S\in\fB(\n)$, the functor
    $\colim_{\fB(S)} \Psi\to\Psi(S)$ is injective on objects. The canonical
    functor
	\[2\textnormal{-}\colim_{\fB(\n)}\Psi\to\colim_{\fB(\n)}\Psi\]
	is an equivalence of categories.
\end{Co}
	
We can see the application of the above theorem (in conjunction with \cite[Thm
3.2]{gmj}) already in the following simple example:

\begin{Ex}\label{ex52504}
    Let $x,y\in S^1$ be two distinct points on the unit circle $S^1$. The
    subsets $U_x:=S^1\setminus\{x\}$ and $U_y:=S^1\setminus\{y\}$ define an open
    covering. Set $U_{xy}:=U_x\cap U_y$. By \cite[Thm 3.2]{gmj}, we know that
    $\Pi_1(S^1)$ can be expressed as the 2-pushout associated to the above
    covering. Since the fundamental groupoids in this 2-pushout have no
    non-trivial morphisms, it suffices to give this diagram (including the
    functors) purely on objects. We represent it as follows:
	\[\xymatrix@C=0.2em{ \bullet &&&&&&&&& \bullet
	\\  &&&& \bullet\ar[ullll]\ar[urrrrr]&\bullet\ar[ulllll]\ar[urrrr].&&&&}\]
    Here, $\bullet$ denotes the groupoid with one object and
    $\bullet\quad\bullet$ denotes the groupoid with two objects. In both cases,
    we have only identity morphisms. The arrows depict the functors on
    objects.
	
    As $n=2$, the condition of Theorem \ref{colim=2-colim} only requires for the
    functor $\Pi_1(U_{12})\to\Pi_1(U_2)$ to be injective on objects. As the
    2-colimit is invariant under equivalences, we can replace our diagram with
	\[\xymatrix@C=0.2em{ \bullet &&&&&&&&&& \bullet\ar@{-}@/^0.5pc/[r] & \bullet
	\\&&&&\bullet\ar[ullll]\ar[urrrrrr]&\bullet\ar[ulllll]\ar[urrrrrr].&&&&}\]
    Here, $\xymatrix@C=0.5em{\bullet\ar@{-}@/^0.5pc/[rr]& & \bullet}$ depicts
    the groupoid with two objects and a single isomorphism between them, which
    is clearly equivalent to the groupoid $\bullet$. As such, $\Pi_1(S^1)$ is
    equivalent to the colimit of the second diagram, which can easily be
    calculated to be the groupoid with one object and automorphism group $\Z$.
\end{Ex}

\section{Induction principle for colimits and 2-colimits}\label{indprin}

\subsection{Induction principle for colimits}

The aim of this section is to prove Corollary \ref{1nab} below, which we used in
our proof of Theorem \ref{colim=2-colim}. Let $\Phi:\fB(\n)^{op}\to\Set$ be a
contravariant functor. According to the definition, elements of
$\lim_{\fB(\n)}\Phi$ are families $(a_W)$, satisfying the compatibility
condition
\[\Phi_{V,W}(a_W)=a_V, \quad V\subseteq W.\]
Here, $W$ and $V$ are running through all the proper subsets of $\n$.

We would like to have an inductive procedure for studying such limits. Consider
the restriction of $\Phi$ on $\fB(\nn)$ and on $\fB'(\nn)$. By abuse of
notation, we denote these functors by $\Phi$ as well. Consider the limit of
$\Phi$ over these posets. This yields the diagram
\[\xymatrix{ & \lim_{\fB'(\nn)}\Phi\ar[d]^\eta\\
	\Phi({\bf n-1})\ar[r]_\delta & \lim_{\fB(\nn)}\Phi. }\]
Here, $\delta$ sends $a_{\bf n-1}\in \Phi({\bf n-1})$ to the family 
$(\Phi_{U,{\bf n-1}}(a_{\bf n-1}))$, where $U$ is a proper subset of ${\bf
n-1}$. In order to describe $\eta$ for a proper subset $U\subsetneq {\bf n-1}$,
we set
\[U_+ =\{n\}\cup U.\]
The functor $\eta$ sends $(a_{U_+})\in
\lim_{\fB'(\nn)}\Phi$ to $(\Phi_{U, U_+}(a_{U_+}))\in \lim_{\fB(\nn)}\Phi$.

\begin{Le}
	The following diagram is a pullback in the category of sets:
    \[\xymatrix{ \lim_{\fB(\n)}\Phi \ar[r]\ar[d] & \lim_{\fB'(\nn)}\Phi\ar[d]
	\\  \Phi({\bf n-1})\ar[r] & \lim_{\fB(\nn)}\Phi. }\]
\end{Le}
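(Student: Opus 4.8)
The plan is to identify $\lim_{\fb(\n)}\Phi$ directly with the fibre product of $\Phi(\nn)$ and $\lim_{\fb'(\nn)}\Phi$ over $\lim_{\fb(\nn)}\Phi$, using the partition of posets $\fb(\n)=\fb(\nn)\coprod\fb'(\nn)\coprod\nn$ established earlier. An element of $\lim_{\fb(\n)}\Phi$ is a family $(a_W)$ indexed by all proper subsets $W\subsetneq\n$, compatible under all the maps $\Phi_{V,W}$ for $V\subseteq W$. First I would split such a family into three pieces according to the partition: the subfamily $(a_W)_{W\in\fb(\nn)}$ indexed by proper subsets not containing $n$ and distinct from $\nn$; the subfamily $(a_Y)_{Y\in\fb'(\nn)}$ indexed by proper subsets containing $n$; and the single element $a_{\nn}\in\Phi(\nn)$.

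Next I would check that the first subfamily lies in $\lim_{\fb(\nn)}\Phi$ and the second in $\lim_{\fb'(\nn)}\Phi$ (immediate, since the compatibility conditions for these subposets are a subset of those for $\fb(\n)$), and that $a_{\nn}$ together with the $\fb(\nn)$-subfamily are matched by $\delta$, while the $\fb'(\nn)$-subfamily maps to the same point of $\lim_{\fb(\nn)}\Phi$ under $\eta$. The content here is exactly the claim that the compatibility relations of $\lim_{\fb(\n)}\Phi$ decompose as: (i) relations internal to $\fb(\nn)$, (ii) relations internal to $\fb'(\nn)$, (iii) relations of the form $\Phi_{U,\nn}(a_{\nn})=a_U$ for $U\subsetneq\nn$, and (iv) relations of the form $\Phi_{U,U_+}(a_{U_+})=a_U$ for $U\subsetneq\nn$ connecting the two halves through their common lower part $\fb(\nn)$. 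Relations (iii) say precisely that $\delta(a_{\nn})$ equals the $\fb(\nn)$-subfamily; relations (iv) say precisely that $\eta$ applied to the $\fb'(\nn)$-subfamily equals that same $\fb(\nn)$-subfamily. Conversely, any triple $(a_{\nn},(a_Y)_{Y\in\fb'(\nn)},(a_W)_{W\in\fb(\nn)})$ agreeing in $\lim_{\fb(\nn)}\Phi$ reassembles to a single compatible family over $\fb(\n)$: one must verify that no further cross-relations are imposed, i.e. that every pair $V\subseteq W$ of proper subsets of $\n$ with $V,W$ on ``opposite sides'' actually factors through $\fb(\nn)$. This is the one point requiring a small argument: if $W\in\fb'(\nn)$ (so $n\in W$) and $V\subseteq W$ with $V\in\fb(\nn)$ (so $n\notin V$), then $V\subseteq W\setminus\{n\}$; writing $W=U_+$ with $U=W\setminus\{n\}\in\fb(\nn)$, the map $\Phi_{V,W}$ factors as $\Phi_{V,U}\circ\Phi_{U,U_+}$, and both factors are among relations (i), (iii)/(iv) already accounted for. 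The only subtlety is the degenerate case $U=\nn$ is excluded because then $W=\n$ is not a proper subset, so $U$ is genuinely in $\fb(\nn)$; and if $V=\nn$ is wanted one uses $a_{\nn}$ and relations (iii).

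Having established the bijection on underlying sets, I would finally observe that it is compatible with the four structure maps of the square — the two legs $\lim_{\fb(\n)}\Phi\to\lim_{\fb'(\nn)}\Phi$ and $\lim_{\fb(\n)}\Phi\to\Phi(\nn)$ are exactly the restriction of a family to its two pieces, and composing with $\eta$ resp.\ $\delta$ lands in $\lim_{\fb(\nn)}\Phi$ as just analysed — so the square commutes and the induced map to the pullback is the bijection we constructed. I expect the main (and only real) obstacle to be the bookkeeping in the previous paragraph: making sure that the factorisation of every comparison map $\Phi_{V,W}$ across the partition is correctly handled, in particular the boundary cases where $V$ or $W$ equals $\nn$ or where one of $\fb(\nn)$, $\fb'(\nn)$ would naively want to include the forbidden top element. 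Once that combinatorial decomposition of the index poset is pinned down, the pullback property is a formal consequence of the definition of $\lim$ as a set of compatible families.
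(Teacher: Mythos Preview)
Your proposal is correct and follows essentially the same approach as the paper: both arguments split a compatible family over $\fb(\n)$ according to the partition $\fb(\n)=\fb(\nn)\amalg\fb'(\nn)\amalg\{\nn\}$ and verify that the resulting data is exactly a point of the fibre product. The only cosmetic difference is that the paper writes down the inverse map $\theta:P\to\lim_{\fb(\n)}\Phi$ explicitly (setting $a_W:=\Phi_{W,W_+}(b_{W_+})$ for $W\in\fb(\nn)$), whereas you keep the $\fb(\nn)$-subfamily as redundant data and analyse the decomposition of the compatibility relations; these amount to the same thing, and your more careful treatment of the cross-relation factorisation $\Phi_{V,W}=\Phi_{V,U}\circ\Phi_{U,U_+}$ is exactly what justifies the paper's claim that $\theta$ is well-defined.
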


\begin{proof}
	Denote the pullback of the diagram
	\[\xymatrix{ & \lim_{\fB'(\nn)}\Phi\ar[d]^\eta\\
		\Phi({\bf n-1})\ar[r]_\delta & \lim_{\fB(\nn)}\Phi }\]
	by $P$. By definition elements of $P$ are pairs $(b_{\bf n-1}, B)$,
    satisfying the condition
    \[\delta(b_{\bf n-1})=\eta(B).\]
    Here, $b_{\bf n-1}$ denotes an element in $\Phi({\bf n-1})$, while $B\in
    \lim_{\fB'(\nn)}\Phi$. That is to say, $B$ is itself a compatible family
    $(b_{U_+})$, where $U$ is a proper subset of $\bf n-1$ and $b_{U_+}$ is an
    object of $\Phi(U_+)$. These objects satisfy the compatibility condition
	\[\Phi_{U_+,V_+}(b_{V_+})=b_{U_+},\]
	where $U\subseteq V$ are proper subsets of ${\bf n-1}$. The condition
	$\delta(b_{\bf n-1})=\eta(B)$ means that one has
	\[\Phi_{U,{\bf n-1}}(b_{\bf n-1})=\Phi_{U,U_+}(b_{U_+})\]
	for any proper subset $U\subseteq {\bf n-1}$. Define the map
	$\xi:\lim_{\fB(\n)}\Phi\to P$ by
	\[(a_W)\mapsto (a_{\bf n-1},A);\quad A=(a_{U_+}).\]
	Here, $W$ (resp. $U$) is running through the set of proper subset of $\n$
	(resp. ${\bf n-1}$). We need to show that $\xi$ is a bijection. To this end,
	define the inverse map $\theta:P\to\lim_{\fB(\n)}\Phi$ by
	\[(b_{\bf n-1}, B)\mapsto (a_W).\]
	Here, we denote
	\[a_W=\begin{cases}
    b_W,\quad{\rm if}\quad W\in \fB'(\nn)
	\\  b_{\bf n-1},\quad{\rm if}\quad W={\bf n-1}
	\\  \Phi_{W,W_+}(b_{W_+}),\quad{\rm if}\quad W\subseteq \fB(\nn).
	\end{cases}\]
	One readily shows that $(a_W)\in \lim_{\fB(\n)}\Phi$ and that $\theta$ is
	inverse to $\xi$.
\end{proof}

Yoneda's lemma yields the following statement for 2-colimits:

\begin{Co}\label{1nab}
	Let $\cC$ be a category with finite limits (resp. colimits) and let
	$\Phi:\fB(\n)^{op}\to\cC$ (resp. $\Psi:\fB\bf(n)\to\cC$) be a
	contravariant (resp. covariant) functor. The following is a pullback diagram
	in the category $\cC$:
	\[\xymatrix{\lim_{\fB(\n)}\Phi\ar[r]\ar[d] & \lim_{\fB'(\nn)}\Phi\ar[d]
	\\  \Phi({\bf n-1})\ar[r]&\lim_{\fB(\nn)}\Phi. }\]
	Likewise, the following is a push-out diagram in the category $\cC$:
	\[\xymatrix{\colim_{\fB(\nn)}\Psi\ar[r]\ar[d]&\colim_{\fB'(\nn)}\Psi\ar[d]
	\\  \Psi({\bf n-1})\ar[r] & \colim_{\fB(\n)}\Psi.}\]
\end{Co}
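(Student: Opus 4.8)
\textbf{Proof proposal for Corollary \ref{1nab}.}

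The plan is to deduce the pullback statement directly from the Lemma by a representability argument, and then obtain the pushout statement by dualizing. First I would observe that the Lemma proves exactly this pullback square in the special case $\bC = \Set$. For a general category $\bC$ with finite limits and an arbitrary object $X \in \bC$, I would apply the contravariant functor $\Hom_\bC(X, -)$ to the square in the Corollary. Since $\Hom_\bC(X, -)$ preserves all limits, it commutes with each of the four limit-objects appearing in the square: that is, $\Hom_\bC(X, \lim_{\fb(\n)}\Phi) \cong \lim_{\fb(\n)}\Hom_\bC(X, \Phi(-))$, and similarly for $\fb(\nn)$, for $\fb'(\nn)$, and trivially for $\Phi(\nn)$ itself. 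Applying the Lemma to the $\Set$-valued functor $\Hom_\bC(X, \Phi(-)):\fb(\n)^{op}\to\Set$ then shows that the image square is a pullback in $\Set$ for every $X$. By the Yoneda-style characterization of limits (a commutative square in $\bC$ is a pullback iff it becomes one in $\Set$ after applying $\Hom_\bC(X,-)$ for all $X$), the original square is a pullback in $\bC$. Here one must be slightly careful that the maps in the square are the canonical ones; but the maps $\delta$ and $\eta$ described before the Lemma are defined by universal properties, hence natural, so applying $\Hom_\bC(X,-)$ does send them to the corresponding maps for the $\Set$-valued functor.

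For the pushout statement, I would dualize. If $\bC$ has finite colimits and $\Phi:\fb(\n)\to\bC$ is covariant, then $\Phi^{op}:(\fb(\n))^{op}\to\bC^{op}$, equivalently a contravariant functor $\fb(\n)^{op}\to\bC^{op}$ — one should be mindful that $\fb(\n)$ is a poset, so we really view $\Phi$ as taking $V\subseteq W$ to a map $\Phi(V)\to\Phi(W)$, and in $\bC^{op}$ this becomes a contravariant functor in the sense required by the first half. The category $\bC^{op}$ has finite limits because $\bC$ has finite colimits, and $\colim$ in $\bC$ is $\lim$ in $\bC^{op}$. Applying the already-established pullback half of the Corollary to $\bC^{op}$ and $\Phi$ viewed this way yields a pullback square in $\bC^{op}$, and reversing all arrows turns it into precisely the asserted pushout square in $\bC$, with $\colim_{\fb(\n)}\Phi$ in the corner opposite $\Phi(\nn)$.

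The main obstacle, modest as it is, is bookkeeping rather than mathematics: one has to check that the canonical comparison maps in the Corollary's squares really do correspond, under $\Hom_\bC(X,-)$ and under the dualization, to the maps $\delta$ and $\eta$ that appear in the Lemma, so that the Lemma applies verbatim. This amounts to unwinding the definitions of the restriction functors $\fb(\nn)\hookrightarrow\fb(\n)$ and $\fb'(\nn)\hookrightarrow\fb(\n)$ and the isomorphism $\alpha$, and confirming that on objects everything matches the formulas $a_W$ given in the proof of the Lemma. Once naturality of all the structure maps is in hand, the representability and duality steps are formal. I would therefore present the proof as: (1) recall that $\Hom_\bC(X,-)$ preserves finite limits and that limits are detected representably; (2) invoke the Lemma on $\Hom_\bC(X,\Phi(-))$ to conclude the pullback half; (3) dualize to get the pushout half.
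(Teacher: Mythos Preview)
Your proposal is correct and is precisely the intended argument: the paper states this as an immediate corollary of the preceding Lemma without further proof, and the passage from the $\Set$ case to a general category ${\bf C}$ via representability, followed by dualization for the colimit statement, is the standard way to read that inference. One small slip: $\Hom_{\bf C}(X,-)$ is covariant, not contravariant, but this does not affect your argument since everything you say about it preserving limits is correct.
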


\subsection{Induction principle for 2-colimits}

The aim of this section is to prove Corollary \ref{2nab}, also used in the proof
of Theorem \ref{colim=2-colim}. Let $\Phi:I^{op}\rightarrow \Grp$ be a
contravariant functor from the category $I$ to the category of groupoids.

Recall the construction of the groupoid $\tl\limits_i\Phi_i$, as discussed in
Section \ref{s.lim-colim}. The 2-pullback (which is in particular a 2-limit)
of the diagram
\[\xymatrix{ & \cG_1\ar[d]^{f_1} \\ \cG_2\ar[r]_{f_2}& \cG_0}\]
is the following groupoid: Objects are triples $(A_1,A_2,\alpha)$, where $A_i$
is an object of $\cG_i$ and $\alpha:f_1(A_1)\to f_2(A_2)$ is an isomorphism. A
morphism $(A_1,A_2,\alpha)\to(B_1,B_2,\beta)$ is a pair $(g_1,g_2)$, where
$g_i:A_i\to B_i$ is a morphism in $\cG_i$, such that the following diagram
\[\xymatrix{ f_1(A_1)\ar[r]^{\alpha}\ar[d]_{f_1(g_1)}&f_2(A_2)\ar[d]^{f_2(g_2)}
\\  f_1(B_1)\ar[r]_\beta& f_2(B_2) }\]
commutes.

Likewise, objects of the groupoid $\tl\limits_{\fB(\n)}\Phi$ are collections
\[(a_U,\alpha_{U,V}:\Phi_{U,V}(a_V)\to a_U),\]
where $U$ and $V$ are proper subsets of $\n$ with $U\subseteq V$. Moreover,
$a_U$ (resp. $\alpha_{U,V}$) is an object (resp. isomorphism) of $\Phi(U)$. One
requires that for all $U\subseteq V\subseteq W\subsetneq {\bf n}$, the following
diagram
\[\xymatrix{\Phi_{U,W}(a_W)\ar[rr]^{\Phi_{V,W}(\alpha_{VW})}
\ar[dr]_{\alpha_{UW}}&& \Phi_{U,V}(a_V)\ar[dl]^{\alpha_{U,V}}\\	& a_U & }\]
commutes. A morphism $(a_U,\alpha_{U,V})\to(b_U,\beta_{U,V})$ is a collection
$(g_U)$, where $U$ is a proper subset of $\n$ and $g_U:a_U\to b_U$ is a morphism
of $\Phi(U)$. Further, for all proper subsets $U\subseteq V$, the following
diagram
\[\xymatrix{ \Phi_{U,V}(a_V)\ar[r]^{\alpha_{U,V}}\ar[d]_{\Phi_{U,V}(g_V)} & a_U
\ar[d]^{g_U}\\ \Phi_{U,V}(b_V)\ar[r]^{\beta_{U,V}} & b_U }\]
has to commute. Let us denote by $\cP$ the 2-pullback of the diagram
\[\xymatrix{ & \tl\limits_{\fB'(\nn)}\Phi\ar[d]^\eta
\\  \Phi({\bf n-1})\ar[r]_\delta & \tl\limits_{\fB(\nn)}\Phi.}\]
The functor $\delta$ sends an object $a_{\bf n-1}$ of the category 
$\Phi({\bf n-1})$ to the collection $(a_U,\alpha_{U,V})$, where $U\subseteq V$
are proper subsets of $\bf n-1$, $a_U=\Phi_{U,{\bf n-1}}(a_{\bf n-1})$ and
$\alpha_{U,V}=\id_{a_U}$. The functor $\eta$ sends the collection $(a_{U_+},
\alpha_{U_+,V_+})$ to the collection
\[(\Phi_{U_+,U}(a_{U_+}),\Phi_{U_+,U}(\alpha_{U_+,V_+})).\]
Here, $U\subseteq V$ are proper subsets of ${\bf n-1}$. Thus, objects of $\cP$
are triples
\[(b_{\bf n-1},B,\zeta:\eta(B)\to\delta(a_{\bf n-1})).\]
Here, $B=(b_{U_+},\beta_{U_+,V_+})$ is an object of
$\tl\limits_{\fB'(\nn)}\Phi$, $b_{\bf n-1}$ is an object of $\Phi({\bf n-1})$
and $\zeta$ is a morphism of the category $\tl\limits_{\fB(\nn)}\Phi $.

In more details, the objects of $\cP$ are collections 
\[(b_{\bf n-1}\in Ob(\Phi({\bf n-1})),\quad b_{U_+}\in Ob(\Phi(U_+)),
\quad \beta_{U_+,V_+},\quad\zeta_{U}),\]
where $U\subseteq V$ is a proper subset of ${\bf n-1}$,
$\beta_{U_+,V_+}:\Phi_{U_+,V_+}(b_{V_+})\to b_{U_+}$ is a morphism of the
category $\Phi(U_+)$, while
\[\zeta_{U}:\Phi_{U,{\bf n-1}}(b_{\bf n-1})\to\Phi_{U,U_+}(b_{U_+})\]
is a morphism of the category $\Phi(U)$. All these must satisfy two
compatibility conditions: The first conditions says that for any proper subsets
$U\subseteq V\subseteq W$ of $\bf n-1$, one has
\[\beta_{U_+,W_+}=\beta_{U_+,V_+}\circ \Phi_{U_+,V_+}(\beta_{V_+,W_+}).\]
The second condition is that for any proper subsets $U\subseteq V$ of $\bf n-1$,
the diagram
\[\xymatrix{\Phi_{U,{\bf n-1}}(b_{\bf n-1})\ar[rr]^{\Phi_{U,V}(\zeta_V)}
        \ar[dr]_{ \zeta_{U}} & & \Phi_{U,V_+}(b_{V_+})\ar[dl]^{\hspace{1em}
        \Phi_{U,U_+}(\beta_{U_+,V_+})}\\  & \Phi_{U,U_+}(b_{U_+}) & }\]
commutes. 

We are now in a position to construct the functors
\[\Gamma:\tl_{\fB(\n)}\Phi\to\cP\quad{\rm and}\quad
\Delta:\cP\to\tl_{\fB(\n)}\Phi.\]
The functor $\Gamma$ simply forgets some data. More precisely, take an object
$(a_S,\alpha_{S,T})$ of the category $\tl\limits_{\fB(\n)}\Phi$, where
$S\subseteq T$ are proper subsets of $\n$. Then, $\Gamma$ sends this object to
the collection
\[(b_{\bf n-1}\in Ob(\Phi({\bf n-1})),\quad b_{U_+}\in Ob(\Phi(U_+)),\quad
\beta_{U_+,V_+},\quad\zeta_{U}),\]
where $b_{\bf n-1}=a_{\bf n-1}$, $b_{U_+}=a_{U+}$, $\beta_{U_+,V_+}=
\alpha_{U_+,V_+}$ and $\zeta_U=\alpha_{U,U_+}^{-1}\circ\alpha_{U,{\bf n-1}}$.
Here, $U\subseteq V$ are proper subsets of ${\bf n-1}$.

The functor $\Delta$ is defined by
\[\Delta(b_{\bf n-1}, b_{U_+}, \beta_{U_+,V_+},\zeta_{U})=(a_S,\alpha_{S,T}),\]
where
\begin{equation*}a_S=\begin{cases} b_{\bf n-1},\quad{\rm if}\quad S={\bf n-1},
\\  b_{S},\quad {\rm if}\quad S\in\fB'(\nn),
\\  \Phi_{S,{\bf n-1}}(b_{\bf n-1}),\quad {\rm if}\quad S\in \fB(\nn)
\end{cases}
\end{equation*}
and
\begin{equation*}
\hspace{7.5em}\alpha_{S,T}=\begin{cases}\beta_{S,T}, \quad {\rm if} \quad
S\in \fB'(\nn)\\ \id, \quad {\rm if} \quad T\in \fB(\nn)\\ \id,\quad{\rm if}
\quad T={\bf n-1}\\ \zeta_{S}^{-1}\circ \Phi_{S_+,S}(\beta_{S^+,T}), \quad
{\rm if} \quad T\in \fB'(\nn) \ \ \& \ \ S\in \fB(\nn).
\end{cases}
\end{equation*}
We have used the fact that there are exactly four different cases if $U\subseteq
T$ are proper subsets of $\bf n$.

The composite $\Gamma\circ\Delta$ is the identity functor. On the other hand,
the composite $\Delta\circ\Gamma$ sends the collection $(a_S,\alpha_{S,T})$ to
the collection $(\tilde{a}_S,\tilde{\alpha}_{S,T})$. Here,
\begin{equation*}
\tilde{a}_{S}= \begin{cases} a_S, \quad {\rm if} \quad S={\bf n-1},\\
\id, \quad {\rm if} \quad  T={\bf n-1},\\
\id \quad {\rm if} \quad  T\in \fB(\nn)\\
\end{cases} 
\end{equation*}
and
\begin{equation*}
\hspace{8.5em}\tilde{\alpha}_{S,T}= \begin{cases} \alpha_{S,T}, \quad
{\rm if} \quad S\in \fB'(\nn),\\
\id, \quad {\rm if} \quad S\in \fB'(\nn),\\
\id, \quad {\rm if} \quad  T={\bf n-1}, \\
\id, \quad {\rm if} \quad T\in \fB'(\nn) \ \ \& \ \ S\in \fB(\nn).
\end{cases} 
\end{equation*}
Observe that
$$\xi_S=\begin{cases} \id, \quad {\rm if} \quad S={\bf n-1}\\
\id, \quad {\rm if} \quad S\in\fB'(\nn)\\
\alpha_{S,{\bf n-1}}, \quad {\rm if} \quad S\in\fB(\nn)
\end{cases}
$$
defines a natural transformation $\Delta\circ\Gamma\to\id$. This implies the
following proposition:

\begin{Pro}
	The following is a 2-pullback diagram in the 2-category of groupoids:
	\[\xymatrix{ \tl\limits_{\fB(\n)}\Phi \ar[r]\ar[d] & \tl\limits_{\fB'(\nn)}
		\Phi\ar[d]\\ \Phi({\bf n-1})\ar[r] & \tl\limits_{\fB(\nn)}\Phi.}\]
\end{Pro}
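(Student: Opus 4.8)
The plan is to exhibit the functors $\Gamma:\tl_{\fb(\n)}\Phi\to{\bf P}$ and $\Delta:{\bf P}\to\tl_{\fb(\n)}\Phi$ constructed above as mutually quasi-inverse equivalences of groupoids. By construction ${\bf P}$ is the 2-pullback of the cospan $\Phi(\nn)\xto{\delta}\tl_{\fb(\nn)}\Phi\xleftarrow{\eta}\tl_{\fb'(\nn)}\Phi$, and $\Gamma$ is the canonical comparison functor into ${\bf P}$ determined by the square in the statement; so it suffices to show that $\Gamma$ is an equivalence. For that I would verify three things: (a) $\Gamma$ and $\Delta$ are well defined functors; (b) $\Gamma\circ\Delta=\id_{\bf P}$; and (c) the family $\xi=(\xi_S)$ defines a natural isomorphism $\Delta\circ\Gamma\Rightarrow\id$.

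First I would check (a). For $\Gamma$ the one substantive point is that the isomorphisms $\zeta_U=\alpha_{U,U_+}^{-1}\circ\alpha_{U,\nn}$ satisfy the triangle condition imposed on objects of ${\bf P}$; this follows from the 1-cocycle identities for $(\alpha_{S,T})$ along the chains $U\subseteq V\subseteq\nn$, $U\subseteq U_+\subseteq V_+$ and $U\subseteq V\subseteq V_+$, after inverting a factor. For $\Delta$ one must check that the output family $(a_S,\alpha_{S,T})$ genuinely lies in $\tl_{\fb(\n)}\Phi$, i.e. obeys the 1-cocycle condition along every chain $S\subseteq T\subseteq W$ of proper subsets of $\n$. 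Since $\nn$ is a maximal proper subset of $\n$ and every member of $\fb(\nn)$ sits below the member $X_+$ of $\fb'(\nn)$, the a priori combinations of the three sorts of $S$, $T$, $W$ (equal to $\nn$, lying in $\fb'(\nn)$, or lying in $\fb(\nn)$) collapse to a short list of cases that actually occur; in each, the identity to be checked unwinds to a single instance of either the first compatibility condition of ${\bf P}$ (the cocycle identity for $(\beta_{U_+,V_+})$) or the second (the $\zeta$-triangle), the only nontrivial input being the defining formula $\alpha_{S,T}=\zeta_S^{-1}\circ\Phi_{S_+,S}(\beta_{S_+,T})$ in the case $S\in\fb(\nn)$, $T\in\fb'(\nn)$. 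Compatibility of $\Gamma$ and $\Delta$ with composition and identities of morphisms is then a formal diagram chase organised by the same cases.

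Step (b) is read straight off the two formulas: feeding $\Delta(b_\nn,b_{U_+},\beta_{U_+,V_+},\zeta_U)$ into $\Gamma$ returns the same four pieces of data, because in the cases that arise $\alpha_{U,U_+}$ evaluates to $\zeta_U^{-1}$ and $\alpha_{U,\nn}$ to the identity, so $\zeta_U$ is recovered. For (c), I would check that $\xi_S$ — which is the identity unless $S\in\fb(\nn)$, where it equals $\alpha_{S,\nn}$ — intertwines the connecting isomorphisms of $(a_S,\alpha_{S,T})$ with those of $(\Delta\circ\Gamma)(a_S,\alpha_{S,T})$; this is once more a short case analysis on the sort of the pair $S\subseteq T$, each case being the cocycle identity for $(\alpha_{S,T})$ along $S\subseteq T\subseteq\nn$ or along $S\subseteq S_+\subseteq T$. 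Naturality of $\xi$ in the object is immediate, and each $\xi_S$ is invertible simply because $\Phi(S)$ is a groupoid; hence $\xi$ is a natural isomorphism. Combining (b) and (c) shows $\Gamma$ is an equivalence with quasi-inverse $\Delta$, so the square is a 2-pullback.

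The real work — and the only obstacle — will be the bookkeeping in (a) and (c): setting up the case distinctions on the sorts of the entries of a chain so that every surviving case is visibly one application of one of the stated compatibility identities. Nothing is needed beyond those identities, the 1-cocycle condition in $\tl_{\fb(\n)}\Phi$, and the fact that we are working in groupoids.
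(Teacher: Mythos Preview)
Your proposal is correct and follows exactly the approach of the paper: the paper sets up $\Gamma$, $\Delta$ and $\xi$ just before the proposition and then simply states that these data ``imply'' the result, whereas you spell out the three verifications (well-definedness, $\Gamma\circ\Delta=\id$, and $\xi:\Delta\circ\Gamma\Rightarrow\id$ a natural isomorphism) and indicate how each reduces to the 1-cocycle identities. If anything, your outline is more detailed than what the paper actually writes down.
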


Yoneda's 2-lemma yields the following statement for 2-colimits:

\begin{Co}\label{2nab}
	The following is a 2-pushout diagram of groupoids
	\[\xymatrix{ \tc_{\fB(\nn)}\Psi\ar[r]\ar[d] & \tc_{\fB'(\nn)}\Psi\ar[d]
		\\  \Psi({\bf n-1})\ar[r] & \tc_{\fB(\n)}\Psi }\]
\end{Co}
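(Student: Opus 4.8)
The plan is to deduce Corollary~\ref{2nab} from the preceding Proposition by a formal $2$-Yoneda argument, using the universal property of the $2$-colimit. Fix an arbitrary groupoid $\cG$ and apply the Proposition to the contravariant $2$-functor $\Psi_\cG:=\Hom_{\Grp}(\Phi(-),\cG):\fb(\n)^{op}\to\Grp$ (and to its restrictions along $\fb(\nn)\hookrightarrow\fb(\n)$ and $\fb'(\nn)\hookrightarrow\fb(\n)$). Since $\Psi_\cG(\nn)=\Hom_{\Grp}(\Phi(\nn),\cG)$, the Proposition tells us that for each $\cG$ the square
$$\xymatrix{ \tl\limits_{\fb(\n)}\Psi_\cG \ar[r]\ar[d] & \tl\limits_{\fb'(\nn)}\Psi_\cG\ar[d]\\ \Hom_{\Grp}(\Phi(\nn),\cG)\ar[r] & \tl\limits_{\fb(\nn)}\Psi_\cG }$$
is a $2$-pullback of groupoids, and this is $2$-natural in $\cG$.

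The next step is to rewrite each corner via the defining $2$-representability of the $2$-colimit: for every subposet $J\subseteq\fb(\n)$ there is an equivalence $\tl_J\Hom_{\Grp}(\Phi(-),\cG)\simeq\Hom_{\Grp}(\tc_J\Phi,\cG)$, $2$-natural in $\cG$. Applying this for $J=\fb(\n),\fb'(\nn),\fb(\nn)$ identifies the square above, up to coherent equivalence, with the image under $\Hom_{\Grp}(-,\cG)$ of the square in the statement of Corollary~\ref{2nab}. Hence $\Hom_{\Grp}(-,\cG)$ carries that square to a $2$-pullback, $2$-naturally in $\cG$. Finally, since a coherently $2$-commutative square in $\Grp$ is a $2$-pushout precisely when $\Hom_{\Grp}(-,\cG)$ turns it into a $2$-pullback $2$-naturally in $\cG$ -- which is exactly the definition of a $2$-pushout as a $2$-colimit over a span, unwound via $2$-representability -- the square of Corollary~\ref{2nab} is a $2$-pushout, as claimed.

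The step I expect to be the real obstacle is the compatibility check folded into the middle paragraph. One has to verify that the equivalences $\tl_J\Hom_{\Grp}(\Phi(-),\cG)\simeq\Hom_{\Grp}(\tc_J\Phi,\cG)$, for $J=\fb(\n),\fb'(\nn),\fb(\nn)$, intertwine the restriction functors along the two poset inclusions and the evaluation at $\nn$ with $\Hom_{\Grp}(-,\cG)$ applied to the canonical comparison functors $\tc_{\fb'(\nn)}\Phi\to\tc_{\fb(\n)}\Phi$, $\Phi(\nn)\to\tc_{\fb(\n)}\Phi$ and $\tc_{\fb(\nn)}\Phi\to\tc_{\fb'(\nn)}\Phi$ (the last one induced by the poset isomorphism $\alpha:\fb(\nn)\to\fb'(\nn)$ together with the relations $X\subseteq X\cup\{n\}$), and moreover that the coherence $2$-cell filling the $2$-pullback matches the one filling the $2$-pushout. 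This forces one to track the $2$-cells of the Hom-groupoids and not merely their objects and arrows, which is exactly what upgrades the conclusion from an ordinary pushout to a $2$-pushout; the remainder is bookkeeping.
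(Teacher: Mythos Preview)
Your proposal is correct and follows essentially the same approach as the paper: the paper's entire proof is the single sentence ``Yoneda's lemma yields the following statement for $2$-colimits,'' and what you have written is precisely a detailed unpacking of that Yoneda argument---applying the Proposition to $\Hom_{\Grp}(\Phi(-),\cG)$ and then invoking the $2$-representability of the $2$-colimit. The compatibility check you flag in your last paragraph is indeed the substance hidden behind the paper's one-line invocation, and your description of what needs to be verified (that the representing equivalences intertwine the restriction maps with the canonical comparison functors, together with the coherence $2$-cells) is accurate.
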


\section{2-limits on subsets with codimension at most 3}\label{cod3}

Note that the condition of Theorem \ref{colim=2-colim} actually consists of
$2^{n-1}$ conditions. Each of these conditions itself has powersets in the
indecies. It is well-known that the latter can be reduced to $n^2$ many, since
the colimit over a poset can be reduced to the colimit over its subposet
consisting of elements with codimension at most 2.

In this section, we wish to simplify the condition of our theorem from something
exponential to something of magnitude $n^3$.
\newline

Let $0\leq k<n$ be a natural number. Denote by $\fB({\bf n,k})$ the elements of
$\fB(\n)$, which are subsets $S$ of ${\bf n}$, with $|S|\geq k$.

We can consider the restriction of any functor $\Phi:\fB^{op}\to\Grp$ to
$\fB({\bf n,k})$, which will again be denoted by $\Phi$.

\begin{Pro}\label{3212.13.04}
	Consider the obvious forgetful functor
	\begin{equation}\label{shez}
	\gamma_{k}:\tl_{\fB(\n)}\Phi\to\tl_{\fB({\bf n,k})}\Phi.
	\end{equation}
	\begin{itemize}
		\item[i)] The functor $\gamma_k$ is faithful.
		\item[ii)] The functor $\gamma_k$ is full and faithful if $k\leq n-2$.
        \item[iii)] The functor $\gamma_k$ is an equivalence of categories if
        $k\leq n-3$.
	\end{itemize}
\end{Pro}

\begin{proof}
    i) Take two objects $A=(a_S,\alpha_{S,T})$ and $B=(b_S,\beta_{S,T})$ of
    $\tl\limits_{\fB(\n)}\Phi$, where $S\subseteq T$ are proper subsets of $\n$.
    Assume $(f_U)$ and $(f'_U)$ are morphisms $A\to B$, such that $f=f'$ in
    $\tl_{\fB({\bf n,k})}\Phi$. In partucular, $\gamma_k(f_S)=\gamma_k(f'_S)$ if
    $|S|=n-1$. We have to show that $f_U=f'_U$ for all proper subsets
    $U\subsetneq\n$. For a given $U$, choose an $S$, such that $U\subseteq S$
    and $|S|=n-1$. We have a commutative diagram
	\[\xymatrix{ \phi_{U,S}(a_S)\ar[rr]^{\alpha_{U,S}}\ar[d]_{\phi_{U,S}(f_S)}
	& & a_U\ar[d]^{f_U}\\   \phi_{U,S}(b_S)\ar[rr]_{\beta_{U,S}} & & b_U.}\]
    This implies that
    \[f_U=\beta_{U,S}\circ\phi_{U,S}(f_S)\alpha_{U,S}^{-1}.\]
    Likewise, we have
    \[f'_U=\beta_{U,S}\circ\phi_{U,S}(f'_S)\alpha_{U,S}^{-1}.\]
    Since $f_S=f'_S$, we obtain $f_U=f'_U.$
	
    ii) By i), we need to show that $\gamma_k$ is full. We keep the notation as
    in i). Take a morphism $\gamma_k(A)\to\gamma_k(B)$. It is given by a
	collection of morphisms $f_S:a_S\to b_S$, where $|S|\geq k$. Moreover, if
	$S\subseteq T$ are two such subsets, we have a commutative diagram
	\[\xymatrix{ \phi_{S,T}(a_S)\ar[rr]^{\alpha_{S,T}}\ar[d]_{\phi_{S,T}(f_T)}
	& & a_S\ar[d]^{f_S} \\  \phi_{S,T}(b_T)\ar[rr]_{\beta_{S,T}} & & b_S.}\]
	We have to extend the definition of $f_S$ to include subsets $S\subseteq
    {\bf n}$ with $|S|<k$, in such a way, that the corresponding diagrams still
    commute for all $S\subseteq T$. This is done by induction on $i$, where
    $i=k-|S|$. If $i=0$, we already have such maps.
	
	Let $i>0$ and $U$ be a subset with $|U|=|S|+1$. Then $f_U$ is already
	defined by the induction assumption. We claim that the map
	\[\beta_{S,U}\circ\phi_{S,U}(f_U)\circ \alpha_{S,U}^{-1}:a_S\to b_S\]
	is independent of the choice of $U$. Assume $V$ is another subset of {\bf
		n}, such that $|V|=|S|+1$, $S\subseteq V$ and $V\not =U$. Take $W=V\cup U$.
	We have $|W|=|S|+2\leq n-1$. By the induction assumption, we have a
	commutative diagram
	\[\xymatrix{ \phi_{U,W}(a_W)\ar[rr]^{\alpha_{U,W}}\ar[d]_{\phi_{U,W}(f_W)}
	& & a_U\ar[d]^{f_U}\\ \phi_{U,W}(b_W)\ar[rr]_{\beta_{U,W}} & & b_U.}\]
	It follows that
	\begin{align*}\beta_{S,U}\circ\phi_{S,U}(f_U)\circ \alpha_{S,U}^{-1} & =
	\beta_{S,U} \phi_{S,U}(\beta_{U,W}\phi_{U,W}(f_W)\alpha_{U,W}^{-1})
	\alpha_{S,U}^{-1}
	\\  & = \beta_{S,U} \phi_{S,U}(\beta_{U,W})\phi_{S,U}( \phi_{U,W}(f_W))
	\phi_{U.W}(\alpha_{U,W}^{-1}) \alpha_{S,U}^{-1}\\
	& = \beta_{SW}\phi_{S,W}(f_W)\alpha_{S,W}^{-1}.
	\end{align*}
	Quite similarly, we obtain
	\[\beta_{S,V}\circ\phi_{S,V}(f_V)\circ \alpha_{S,V}^{-1}=\beta_{SW}
	\phi_{S,W}(f_W)\alpha_{S,W}^{-1}\]
	and the claim follows.
	
	It remains to show that the appropriate diagram commutes for $S\subseteq T$.
	Choose $U$ such that $S\subseteq U\subseteq T$ and $|U|=|S|+1$. The
	corresponding diagram commutes for $U\subseteq T$ by the induction
	assumption. On the other hand, we also have a commutative diagram for the
	pair $S\subseteq U$. This is due to the definition of $f_S$. We obtain a
    commutative diagram for the pair $S\subseteq T$, by gluing the two diagrams
    we just discussed.
	
    iii) By ii), we only need to show that $\gamma_k$ is essentially surjective.
    Take an object $B$ of $\tl\limits_{\fB(\bf n,k)}\Phi$. By definition, $B$ is
    a collection $(b_S,\beta_{S,T})$, where $|S|\geq k$ and $S\subseteq T$.
    These data satisfy the compatibility condition, which means that an
    appropriate diagram commutes, see the diagram in i). The task is to define
	an object $A=(a_S,\alpha_{S,T})$ of $\tl\limits_{\fB(\bf n)}\Phi$, such that
	$\gamma_k(A)\simeq B$. In fact, we will show $\gamma_k(A)=B$.
	
	If $|S|\geq k$, we set $a_S=b_S$ and $\alpha_{S,T}=\beta_{S,T}$. Let
	$|S|<k$. We may assume that the object $A_U$ is already defined for all $U$
	with $|U|>|S|$. Choose $i\in \n$ such that $i\not \in S$. We write $S\cup i$
	instead of $S\cup \{i\}$ and set
	\[a_S:=\phi_{S,S\cup i}(a_{S\cup i}).\]
	We wish to define the morphisms $\alpha_{S,T}:\phi_{S,T}(a_T)\to a_S$. There
	are several cases to consider:
	\item[$\bullet$] Let $T=S$. We put $\alpha_{S,S}:=\id$. We are left with
	$|T|>|S|$.
	\item[$\bullet$] Let $i\in T$. We define $\alpha_{S,T}:=\phi_{S,S\cup
		i}(\alpha_{S\cup i,T})$. 
	\item[$\bullet$] Assume $T\not =\n\setminus \{i\}$. Then $T\cup i$ is also a
	proper subset and we can put
	\[\alpha_{S,T}=\phi_{S,S\cup i}(\alpha_{S\cup i,T\cup i})\circ
	\phi_{S,T}(\alpha_{T,T\cup i})^{-1}.\]
	This is equivalent to saying that the following diagram commutes:
    \[\xymatrix{ \phi_{S,T\cup i}(a_{T\cup i})\ar[d]_{\phi_{S,S\cup i}
    (\alpha_{S\cup i,T\cup i})}\ar[rr]^{\phi_{S,T}(\alpha_{T,T\cup i})}
    \ & & \phi_{S,T}(a_T)\ar[d]^{\alpha_{S,T}}\\
	\phi_{S,S\cup i}(a_{S\cup i})  \ar[rr]_{\id} & & a_S.}\]
	\item[$\bullet$] It remains to consider the case $T=\n\setminus \{i\}.$
	Choose $j\in T$ such that $j\not \in S$. We set
	\[\alpha_{S,T}=\alpha_{S,S\cup j}\circ \phi_{S,S\cup j}(\alpha_{S\cup
		j,T}).\]
	One gets the following commutative diagram
	\[\xymatrix{ & & & \phi_{S,T}(a_T)\ar[d]^{\phi_{S,S\cup j}
	(\alpha_{S\cup j,T})}\\ \phi_{S,S\cup i\cup j}(a_{S\cup i\cup j})
	\ar[rrr]^{\phi_{S,S\cup j}(\alpha_{S\cup j, S\cup i\cup j})}
	\ar[d]_{\phi_{S,S\cup i}(\alpha_{S\cup i,S\cup i\cup j})} & & &
	\phi_{S,S\cup j}(a_{S\cup j})\ar[d]^{\phi_{S,S\cup j}(\alpha_{S,S\cup j})}
	\\  \Phi_{S,S\cup i}(a_{S\cup i})\ar[rrr]_{\id} & & & a_{S}}\]
	from the definition of $\alpha_{S,S\cup j}$. This definition is independent
	on the choice of $j$: In fact, chose $k$ instead of $j$. The set $S\cup
	i\cup j\cup k$ is still a proper subset of $\n$ (since $|S|<k\leq n-3$). One
	easily sees that both definitions of $\alpha_{S,T}$ equal the composite
	\[\phi_{S,S\cup i\cup j}(\phi (a_T))\xto{\phi_{S,S\cup i\cup
			j}(\alpha_{S\cup i \cup j})} a_{S\cup i \cup j}.\]
	This finishes the definition of $A$. Obviously $\gamma_k(A)=B$ and the
	result follows.
\end{proof}

 \begin{Co}\label{32131404}
     Let $n\geq 3$ be a natural number and $\Phi:\fB(n)\to\Grp$ a covariant
     strict $2$-functor. Recall that $\fB(n,k)$ is the subposet of $\fB(\n)$
     formed by subsets of cardinality $\geq k$. The canonical functor
     \[\tc_{\fB({\bf n,n-3})}\Phi\to\tc_{\fB(\n)}\Phi\]
     is an equivalence categories.
 \end{Co}

 \begin{proof}
     For groupoid $\cG$, we have
     \[\Hom_\Grp( \tc_{\fB(\n)}\Phi,\cG)\cong\tl_{\fB(\n)}\Hom_\Grp(\Phi,\cG).\]
	Thanks to Proposition \ref{3212.13.04}, we can replace the RHS as follows:
    \[\cong \tl_{\fB(\n, \bf n-3)}\Hom_\Grp(\Phi,\cG)\cong [\Hom_\Grp(
    \tc_{\fB(\n, {\bf n-3})}\Phi,\cG)\]
    The result follows.
\end{proof}
	
Recall that for subsets $V\subseteq{\bf n}$ and $U\subsetneq V$ condition
$A^V_U$ says that the canonical functor
\[{\sf colim}_{\fB(V:U)}\Phi\to\Phi(V)\]
is injective on objects.
Define
\[V_i={\bf n}\setminus\{i\},\quad U_i=\{i+1,\cdots,n\},\quad 2\leq i\leq n\]
and
\[V_{i,j}={\bf n}\setminus\{i,j\},\quad U_{i,j}=\{i+1,\cdots,n\}\setminus\{j\},
\quad 2\leq i <j\leq n.\]
We are in a position to state the following result:

\begin{Pro}\label{62,25.04}
    Let $n\geq3$ and $\Phi:\fB({\bf n})\to\Grp$ be a covariant strict
    $2$-functor, for which the conditions $A^{V_i}_{U_i}$ and
    $A^{V_{ij}}_{U_{ij}}$ hold for all  $2\leq i\leq n$ and $2\leq i<j\leq n$.
    Then
    \[\tc_{\fB(\n)}\Phi=\colim_{\fB(\bf n,n-3)}\Phi\]
\end{Pro}

\begin{proof}
    Denote by $\overline{ \Phi}$ the strict covariant $2$-functor
    $\fB(n)\to\Grp$, defined by
    \[{\overline{ \Phi}}(V)=\begin{cases} \Phi(V), \quad {\rm if} \quad
    Card(V)\geq n-3\\ \emptyset \quad {\rm if} \quad Card(V)< n-3.\end{cases}\]
    We have
    \[\tc_{\fB(\n)}\Phi= \tc_{\fB({\bf n, n-3})}\Phi\]
    by Corollary \ref{32131404}. Since both $2$-functors restricted on
    $\fB({\bf n, n-3)}$ are equal, it follows that
    \[\tc_{\fB({\bf n, n-3})}\Phi= \tc_{\fB({\bf n, n-3})}\overline{\Phi}\]
    Using Corollary \ref{32131404} again, we obtain
    \[\tc_{\fB({\bf n})}\Phi= \tc_{\fB({\bf n})}\overline{\Phi}.\]
    We claim that $\overline{\Phi}$ satisfies condition $A(n)$ from Theorem
    \ref{colim=2-colim}. Assuming this, we obtain
    \[\tc_{\fB({\bf n})}\Phi=\colim  _{\fB({\bf n})}\overline{\Phi} =
    \colim_{\fB(\bf n,n-3)}\overline{\Phi}=\colim_{\fB(\bf n,n-3)}{\Phi}.\]
    To prove the claim, take $V$, $k$, $U$ as in condition $A(n)$. We observer
    that condition $A(n)$ holds trivially for $\overline{\Phi}$ if $Card(V)\leq
    n-3$, because all Groupoids in the diagrams are empty. Assume $Card(V)=
    n-2$. Since $1\in V$, it follows that
    \[V=V_{i,j},\]
    where $2\leq i <j\leq n$. In the notation of Theorem \ref{colim=2-colim},
    $k=i-1$. Hence $U=U_{i,j}$. Quite similarly, if $Card(V)= n-1$ and $1\in V$,
    we see that $V=V_{i}$, where $2\leq i \leq n$. Thus, $k=i-1$ and $U=U_{i}.$
    By our assumptions, condition $A(n)$ holds for $\overline{\Phi}$.
\end{proof}


\begin{thebibliography}{}
    \bibitem{BP}
        {\sc H. Brenner} and {\sc I. Pirashvili}. The fundamental group of
        binoid varieties. ArXiv.908.05538.
    \bibitem{brown}
        {\sc R. Brown}. Topology. A geometric account of general topology,
        homotopy types and the fundamental groupoid. Ellis Horwood Limited.
        1988.
    \bibitem{brown.sal}
        {\sc R. Brown} and {\sc A.Z. Salleh}. A van Kampen theorem for unions of
        non-connected spaces. Arch. Math. 42(1984), 85-88.
    \bibitem{gz}
        {\sc P. Gabriel} and {\sc M. Zisman}. Calculus of fractions and homotopy
        theory. Ergenbisse der Mathematik und ihre Grenzgebiete. Band 35.
        Springer-Verlag. Berlin, Heidelberg, New York, 1967.
    \bibitem{gmj}
        {\sc I. Pirashvili}. The fundamental groupoid as a terminal costack.
        Georgian Mathematical Journal 22 (2015), p.563-571.
    \bibitem{kyoto}
        {\sc I. Pirashvili}. The \'etale fundamental groupoid as a  $2$-terminal
        costack. Kyoto J. Math. 60 (2020), 379-403.
\end{thebibliography}
\end{document}